\newtheorem{de}{Definition}[section]
\newtheorem{prop}[de]{Proposition}
\newtheorem{cor}[de]{Corollary}
\newtheorem{example}[de]{Example}
\begin{document}

\title{Complex supermanifolds of odd dimension beyond 5}
\author{
{\bf Matthias Kalus}\\
{\small  Fakult\"at f\"ur Mathematik}\\{\small Ruhr-Universit\"at Bochum}\\ {\small D-44780 Bochum, Germany}\\
}
\date{}
\maketitle
\begin{abstract}\noindent
Any non-split complex supermanifold is a deformation of a split supermanifold. These deformations are classified by group orbits in a non-abelian cohomology. For the case of a split supermanifold with no global nilpotent even vector fields, an injection of this non-abelian cohomology into an abelian cohomology is constructed. The cochains in the non-abelian complex appear as exponentials of cochains of nilpotent even derivations. Necessary conditions for a recursive construction of these cochains of derivations are analyzed up to terms of degree six. Results on classes of examples of supermanifolds of odd dimension up to 7 are deduced.
\end{abstract}

Complex supermanifolds appear as deformations of split complex supermanifolds $(M,\mathcal O_{\Lambda E})$, where $E\to M$ is a complex vector bundle and $\mathcal O_{\Lambda E}$ denotes the sheaf of holomorphic sections of $\Lambda E$. These deformations of a split complex supermanifold can be parametrized by $H^0(M,Aut(E))$-orbits in a non-abelian first \v{C}ech cohomology $H^1(M,G_E)$ (see \cite{Gr}). The cocycles of this cohomology appear as exponentials of nilpotent derivations $u$ in $C^1(M,Der^{(2)}(\Lambda E))$ (see \cite{Ro}). Here $Der^{(2)}(\Lambda E)$ denotes the even derivations of $\mathcal O_{\Lambda E}$ that increase the degree by at least two.  In detail, the cochain $u$ has to satisfy the non-abelian cocycle condition $\mathbf{d}\exp(u):=(\exp(u_{ij})\exp(u_{jk})\exp(u_{ki}))_{ijk}=Id$. Due to nilpotency, the appearing exponential series are finite and their length increases with the rank of $E$. So the non-abelian cocycle condition on $u$ becomes more and more complicated with higher odd 
dimension.

\bigskip\noindent
A naturally arising computational question is how to find suitable $u$ that yield supermanifold structures.  We are aiming at the questions:
\begin{itemize}
 \item[(A)] Is it possible to express the non-abelian cocycle condition on $u$ up to non-abelian coboundaries as conditions in the abelian cohomology $H^1(M,Der^{(2)}(\Lambda E))$?
 \end{itemize}
 The $\mathbb Z$-grading of $\mathcal O_{\Lambda E}$ induces a $\mathbb Z$-grading on $Der^{(2)}(\Lambda E)$. So $u$ is a finite sum $u_2+u_4+u_6+\ldots$. Let $2\leq q\leq rank(E)$.
 \begin{itemize}
 \item[(B)] What are the necessary and sufficient conditions on a sum $u_2+\cdots+u_{2q-2}$ to be extendable to a $u\in C^1(M,Der^{(2)}(\Lambda E))$ that defines a supermanifold structure?
\end{itemize}

\bigskip\noindent
In the first section we answer Question (A) for split complex supermanifolds with no global even vector fields that increase the degree by two or more. Speaking of automorphisms of the split supermanifold, this condition $H^0(M,Der^{(2)}(\Lambda E))=0$ can be reformulated as follows: there is no automorphism whose degree preserving part is the identity but the identity itself. Under this condition we construct a well-defined injection $\sigma_D:H^1(M,G_E)\to H^1(M,Der^{(2)}(\Lambda E))$. This generalizes a result on supermanifolds of odd dimension up to $5$ in \cite{Ka}. Note at this point that differing methods for determining the cohomology $H^1(M,G_E)$ can be found in \cite{On}.

\bigskip\noindent
For Question $B$, assume that $u_{(2q-2)}=u_2+\cdots+u_{2q-2}$ satisfies the non-abelian cocycle condition $\mathbf{d}\exp(u_{(2q-2)})$ up to terms of degree $2q$ and higher. The necessary and sufficient condition for the existence of a $u_{(2q)}:=u_{(2q-2)}+u_{2q}$ satisfying $\mathbf{d}\exp(u_{(2q)})$ up to terms of degree $2q+2$ and higher is $pr_{2q}\mathbf{d}\exp(u_{(2q-2)})\in B^2(M,Der_{2q}(\Lambda E))$. Here $pr_{2q}$ denotes the projection onto the degree $2q$ component. In general it is not at all clear that $pr_{2q}\mathbf{d}\exp(u_{(2q-2)})$  either lies in $Z^2(M,End_{2q}(\Lambda E))$ or in  $C^2(M,Der_{2q}(\Lambda E))$. However, we show in the second section that for $q=2,3$, the condition $pr_{2q}\mathbf{d}\exp(u_{(2q-2)})\in Z^2(M,End_{2q}(\Lambda E))$ is automatically satisfied. Even better, the condition $pr_{6}\mathbf{d}\exp(u_{(4)})\in Z^2(M,Der_{6}(\Lambda E))$ only depends on $u_2$. This yields results for several classes of examples where the cohomology $H^2(M,Der_{2q}(\Lambda E))$ 
vanishes.

\bigskip\noindent
We fix the notation. Let $E \to M$ be a holomorphic vector bundle on a complex manifold $M$. Denote by $\mathcal O_E$ its sheaf of sections, by $\mathcal O_{\Lambda E}$ the associated exterior algebra, by $Aut({\Lambda E})$ the sheaf of automorphisms of the $\mathbb Z/2\mathbb Z$-graded sheaf of algebras $\mathcal O_{\Lambda E}$ and by $Der({\Lambda E})$ and $End({\Lambda E})$ the sheaves of even $\mathbb C$-linear derivations, resp.~endomorphisms of $\mathcal O_{\Lambda E}$. Note that the last two sheaves carry a natural $2\mathbb Z$-grading $Der_{2k}({\Lambda E})$, resp. $End_{2k}(\Lambda E)$ given by the condition  $u(\mathcal O_{\Lambda^jE})\subset \mathcal O_{\Lambda^{j+2k}E}$ for all $j \geq 0$. Furthermore set for $k \geq 0$ 
\begin{align*}
  &Der^{(2k)}(\Lambda E):={\textstyle \bigoplus}_{\ell=k}^\infty Der_{2 \ell}(\Lambda E)\ , &&\quad End^{(2k)}(\Lambda E):={\textstyle \bigoplus}_{\ell=k}^\infty End_{2 \ell}(\Lambda E)\ , \\
   &Der_{(2k)}(\Lambda E):={\textstyle \bigoplus}_{\ell=1}^k Der_{2 \ell}(\Lambda E)\qquad  \mbox{ and } &&\quad End_{(2k)}(\Lambda E):={\textstyle \bigoplus}_{\ell=1}^k End_{2 \ell}(\Lambda E) \ .
\end{align*}
Denote for $k\geq 1$ the induced projections by $pr_{2k}:End^{(2)}(\Lambda E) \to End_{2k}(\Lambda E)$ and further  $pr_{(2k)}:End^{(2)}(\Lambda E) \to  End_{(2k)}(\Lambda E)$. Let $G_E \subset Aut({\Lambda E})$ denote the subsheaf of automorphisms satisfying 
$(\varphi-Id)(\mathcal O_{\Lambda^jE})\subset {\textstyle\bigoplus}_{k\geq 1} \mathcal O_{\Lambda^{j+2k}E} \quad \forall j \geq 0$. It was shown in \cite{Ro}, that the exponential  $\exp:End({\Lambda E}) \to Aut({\Lambda E})$ yields a bijection between $Der^{(2)}(\Lambda E)$ and $G_E$. We will frequently use that $pr_{(2q)}\circ f \circ pr_{(2q)}=pr_{(2q)}\circ f$ for $f=\exp$ or $\log$. In the following $\mathbf{d}:C^1(M,Aut(\Lambda E)) \to C^2(M,Aut(\Lambda E))$ denotes the coboundary map with respect to composition. In contrast denote by $d:C^1(M,End^{(2)}(\Lambda E)) \to C^2(M,End^{(2)}(\Lambda E))$ the coboundary map with respect to addition.

\bigskip\noindent
Starting on the other hand with a complex supermanifold $\mathcal M=(M,\mathcal O_\mathcal M)$, the nilpotent elements $\mathcal O_\mathcal M^{nil}$ define the locally free $\mathcal O_M$-module $\mathcal O_\mathcal M^{nil}/(\mathcal O_\mathcal M^{nil})^2$ yielding a holomorphic vector bundle $E$. Denote by $Aut(E)$ the sheaf of automorphisms of the vector bundle $E$ over the identity. It is shown in \cite{Gr} that the isomorphism classes of complex supermanifolds associated in this way with a fixed vector bundle $E$ are parametrized by the $H^0(M,Aut(E))$-orbits on the \v{C}ech cohomology $H^1(M,G_E)$. Note that this cohomology is defined with respect to the composition of maps and hence non-abelian in general.

\section{Embedding non-abelian in abelian cohomology}
\noindent
Aiming at an embedding of the cohomology $H^1(M,G_E)$ into the abelian cohomology of sheaves of $\mathcal O_M$-modules $H^1(M,Der^{(2)}(\Lambda E))$, define for $q\geq 2$ the maps:
\begin{align*}
\begin{array}{rccc}
 R_{2q}:\ &C^1(M, Der^{(2)}(\Lambda E)) \quad & \longrightarrow & C^2(M, End_{2q}(\Lambda E))\\
& u &\longmapsto &  pr_{2q}\big( \mathbf{d}\exp( pr_{(2q-2)}(u))\big)\end{array}
\end{align*}
Denote: 
$$\tilde C^1(M, Der^{(2)}(\Lambda E)):=\{u \in C^1(M, Der^{(2)}(\Lambda E)) \ | \ \exp(u) \in Z^1(M,G_E) \}$$
Note that $pr_{2q}\big( \mathbf{d}\exp(u)\big)=d(pr_{2q}u)+ pr_{2q}(\mathbf{d}\exp( pr_{(2q-2)}(u)))$, so $\exp(u) \in Z^1(M,G_E)$ is equivalent to $ R_{2q}(u)=-d(pr_{2q}(u))$ for all $q\geq 2$ and $d(pr_2(u))=0$. Hence  the images of $\tilde C^1(M, Der^{(2)}(\Lambda E))$ under the maps $R_{2q}$  lie in $ B^2(M, Der_{2q}(\Lambda E))$, respectively. Note that the maps $R_{2q}$ only depend on the component in $Der_{(2q-2)}(\Lambda E)$ of the argument. Hence it is possible to choose maps: 
\begin{align*}
D_{2q}^\prime:  \tilde C^1(M, Der^{(2)}(\Lambda E))  \to C^1(M, Der_{2q}(\Lambda E)) \qquad \mbox{ for }q\geq 2
\end{align*}
that factorize over $ pr_{(2q-2)}:\tilde C^1(M, Der^{(2)}(\Lambda E))\to C^1(M, Der^{(2)}(\Lambda E))$ and 
satisfy the equation $dD^\prime_{2q}(u)=R_{2q}(u)$ for all $u \in  \tilde C^1(M, Der^{(2)}(\Lambda E))$. 
Now set for $q\geq 2$:\footnote{We denote: $(exp(v).exp(u))_{ij}=\exp(v_i)\exp(u_{ij})\exp(-v_j)$}
\begin{align*}
 F_{2q}: &C^0(M, Der^{(2)}(\Lambda E))\times C^1(M, Der^{(2)}(\Lambda E))  \longrightarrow  C^1(M, End^{(2)}(\Lambda E))\\
 & \qquad\qquad (v,u) \longmapsto  pr_{(2q)}( \exp(pr_{(2q-2)}(v)).\exp(pr_{(2q-2)}(u)))
\end{align*}
Set $\lambda(v,u):=\log(\exp(v).\exp(u))$ for $(v,u)\in C^0(M, Der^{(2)}(\Lambda E))\times C^1(M, Der^{(2)}(\Lambda E))$ and note that $u\in \tilde C^1(M,Der^{(2)}(\Lambda E))$ includes  $\lambda(v,u)\in \tilde C^1(M,Der^{(2)}(\Lambda E))$. 
Continue the $D_{2q}^\prime$, $q\geq 2$  to maps
\begin{align*}
 &D_{2q}: \ H^0 (M,Aut(E))\times C^0(M, Der^{(2)}(\Lambda E))\times \tilde C^1(M, Der^{(2)}(\Lambda E))  \\ &\qquad\qquad\qquad\qquad\qquad\qquad\qquad\qquad\qquad\qquad\qquad\qquad\qquad\longrightarrow  C^1(M, Der_{2q}(\Lambda E))
\end{align*}
via $D_{2q}(Id,0,u):=D_{2q}^\prime(u)$ and:
\begin{align}\label{eq:8}
\varphi.D_{2q}(\varphi,v,u):=D_{2q}(Id,0,\varphi.\lambda(v,u))+pr_{2q}(\log(F_{2q}(\varphi.v,\varphi.u)))
\end{align}
and set $D=\sum_{q=2}^\infty D_{2q}$.

\begin{prop}\label{theo}
If there is a choice of maps $D^\prime_{2q}$, $q\geq 2$ satisfying: 
\begin{align}\label{eqF}
D^\prime_{2q}(\lambda(v,u))=D^\prime_{2q}(u)-pr_{2q}(\log(F_{2q}(v,u)))
\end{align}
for all $(v,u)\in C^0(M, Der^{(2)}(\Lambda E))\times \tilde C^1(M, Der^{(2)}(\Lambda E))$ then the induced map 
\begin{align*}
 \sigma_D: \ H^1(M,G_E) \longmapsto H^1(M,Der^{(2)}(\Lambda E))
\end{align*}
given by $\sigma_D([\exp(u)])=[D(Id,0,u)+u]$ is well-defined and injective. If additionally the $D^\prime_{2q}$ can be chosen to be $H^0(M,Aut(E))$-equivariant, then $\sigma_D$ is $H^0(M,Aut(E))$-equivariant.
\end{prop}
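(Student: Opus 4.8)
The plan is to verify four things in turn: that $D(Id,0,u)+u$ is an additive $1$-cocycle for every $u\in\tilde C^1(M,Der^{(2)}(\Lambda E))$; that its class depends only on the $G_E$-cohomology class of $\exp(u)$, making $\sigma_D$ well defined; that $\sigma_D$ is injective; and that it is $H^0(M,Aut(E))$-equivariant once the $D'_{2q}$ are. The cocycle property is immediate from the construction: $D(Id,0,u)=\sum_{q\ge2}D'_{2q}(u)$ with $dD'_{2q}(u)=R_{2q}(u)$, and by the remark preceding the statement, $\exp(u)\in Z^1(M,G_E)$ is equivalent to $R_{2q}(u)=-d(pr_{2q}u)$ for all $q\ge2$ together with $d(pr_2u)=0$. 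Writing $u=pr_2u+\sum_{q\ge2}pr_{2q}u$ and adding the contributions degree by degree,
\begin{align*}
d\big(D(Id,0,u)+u\big)=d(pr_2u)+\sum_{q\ge2}\big(R_{2q}(u)+d(pr_{2q}u)\big)=0 ,
\end{align*}
and the result lies in $Der^{(2)}$, not merely $End^{(2)}$, because each $D'_{2q}$ takes values in $C^1(M,Der_{2q}(\Lambda E))$.

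The heart of the matter is well-definedness. Two cocycles $\exp(u),\exp(u')\in Z^1(M,G_E)$ are cohomologous exactly when $\exp(u')=\exp(v).\exp(u)$ for some $v\in C^0(M,Der^{(2)}(\Lambda E))$, i.e.\ $u'=\lambda(v,u)$, and then $u'\in\tilde C^1$. I would first establish the degreewise Baker--Campbell--Hausdorff identity
\begin{align*}
pr_{2q}(\lambda(v,u))=(d\,pr_{2q}v)_{ij}+pr_{2q}(u_{ij})+pr_{2q}(\log F_{2q}(v,u))\qquad(q\ge2),
\end{align*}
together with $pr_2(\lambda(v,u))=(d\,pr_2v)_{ij}+pr_2(u_{ij})$ in degree two. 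Here one uses that the degree filtration is multiplicative and $pr_{(2q)}\circ\exp=pr_{(2q)}\circ\exp\circ pr_{(2q)}$, so that $pr_{(2q)}\lambda(v,u)$ depends on $v,u$ only up to degree $2q$; splitting off the top components $v_{2q}$ and $u_{2q}$ and noting that any iterated bracket containing one of them lands in degree strictly above $2q$ (the complementary factor having degree $\ge2$), these top components enter the degree-$2q$ part of $\log(\exp v_i\,\exp u_{ij}\,\exp(-v_j))$ only additively, the remaining contribution being precisely $pr_{2q}\log F_{2q}(v,u)$. Combining this with hypothesis \eqref{eqF}, $D'_{2q}(\lambda(v,u))=D'_{2q}(u)-pr_{2q}\log F_{2q}(v,u)$, the $F$-terms cancel and summation over $q$ gives
\begin{align*}
D(Id,0,\lambda(v,u))+\lambda(v,u)=D(Id,0,u)+u+dv ,
\end{align*}
so the two classes in $H^1(M,Der^{(2)}(\Lambda E))$ coincide; I expect this telescoping, repackaged through the three-argument maps with the resulting identity $D_{2q}(Id,v,u)=D'_{2q}(u)$, is exactly what \eqref{eq:8} is arranged to encode.

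Injectivity and equivariance are then formal. Since each $D'_{2q}$ factors through $pr_{(2q-2)}$, the map $u\mapsto D(Id,0,u)+u$ on $\tilde C^1$ is unipotent and upper triangular with respect to the degree filtration, hence injective: the degree-$2$, then degree-$4$, \dots\ parts of its value recover $u$ step by step. So if $\sigma_D([\exp u])=\sigma_D([\exp u'])$ then $D(Id,0,u')+u'=D(Id,0,u)+u+dw$ for some $w\in C^0$, which by the previous paragraph equals $D(Id,0,\lambda(w,u))+\lambda(w,u)$; injectivity then forces $u'=\lambda(w,u)$, i.e.\ $\exp(u')=\exp(w).\exp(u)$, so $[\exp u]=[\exp u']$ in $H^1(M,G_E)$. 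For equivariance, $\varphi\in H^0(M,Aut(E))$ acts through the induced degree-preserving automorphism of $\mathcal O_{\Lambda E}$, which is additive, commutes with $d$, $\exp$, $\log$ and all projections, preserves $Z^1(M,G_E)$ with $\varphi.\exp(u)=\exp(\varphi.u)$, and sends $B^1$ to $B^1$; if every $D'_{2q}$ is equivariant then so is $D(Id,0,\cdot)$, whence $\sigma_D(\varphi.[\exp u])=[D(Id,0,\varphi.u)+\varphi.u]=[\varphi.D(Id,0,u)+\varphi.u]=\varphi.\sigma_D([\exp u])$.

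The one genuinely technical step, and the place I expect the real work, is the Baker--Campbell--Hausdorff identity above: one must verify that in degree $2q$ the difference $\lambda(v,u)-u$ equals $\log F_{2q}(v,u)$ plus exactly the \emph{linear} coboundary $d\,pr_{2q}v$ — that is, that the top-degree parts of $v$ never interact through a bracket at degree $2q$ — while keeping the projection identities $pr_{(2q)}\exp=pr_{(2q)}\exp pr_{(2q)}$ and the multiplicativity of the filtration straight throughout. Everything else reduces to the bookkeeping indicated above.
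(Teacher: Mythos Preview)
Your argument is correct and follows essentially the same route as the paper: both hinge on the degree-$2q$ BCH identity $pr_{2q}(\lambda(v,u))=d\,pr_{2q}(v)+pr_{2q}(u)+pr_{2q}(\log F_{2q}(v,u))$, which combined with hypothesis~(\ref{eqF}) gives $D(Id,0,\lambda(v,u))+\lambda(v,u)=D(Id,0,u)+u+dv$ and hence well-definedness (and equivariance once the $D'_{2q}$ are equivariant). The one place you go beyond the paper is injectivity: the paper simply asserts that ``the first part of the Proposition follows'' for $\varphi=Id$, whereas you supply the missing step, namely that $u\mapsto D(Id,0,u)+u$ is unipotent upper-triangular on $\tilde C^1$ (since $D'_{2q}$ factors through $pr_{(2q-2)}$), so from $D(Id,0,u')+u'=D(Id,0,u)+u+dw=D(Id,0,\lambda(w,u))+\lambda(w,u)$ one recovers $u'=\lambda(w,u)$ degree by degree---this is a genuine and necessary addition.
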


\begin{proof}
For $\exp(u) \in Z^1(M,G_E)$ it is $dD_{2q}(Id,0,u)=dD^\prime_{2q}(u)=R_{2q}(u)=-d(pr_{2q}(u))$ for all $q\geq 2$ and $dpr_2(u)=0$. So we find  $D(Id,0,u)+u\in Z^1(M,Der^{(2)}(\Lambda E))$. 
Further note that:
\begin{align*}
 pr_{(2q)}(\exp(v).\exp(u))=dpr_{2q}(v)+pr_{2q}(u)+F_{2q}(v,u)
\end{align*}
Using this, $H^0(M,Aut(E))$-equivariance of $\lambda$ and $F_{2q}$, and reasons of degree:
\begin{align*}
 pr_{2q}(\varphi.\lambda(v,u))&= pr_{2q}\log( pr_{(2q)}(\exp(\varphi.v).\exp(\varphi.u)))\\
 &= pr_{2q}\log(dpr_{2q}(\varphi.v)+pr_{2q}(\varphi.u)+F_{2q}(\varphi.v,\varphi.u))\\
 &=pr_{2q}(d\varphi.v+\varphi.u)+pr_{2q}(\log(F_{2q}(\varphi.v,\varphi.u)))
\end{align*}
So:
\begin{align*}
 &\sigma_D([\varphi.(\exp(v).\exp(u))])=\sigma_D([\exp(\varphi.\lambda(v,u))])\\=\  &[D(Id,0,\varphi.\lambda(v,u))+d\varphi.v+\varphi.u+\textstyle{\sum_{q=2}^\infty} pr_{2q}(\log(F_{2q}(\varphi.v,\varphi.u))])
\end{align*}
differing from $\varphi.\sigma_D([\exp(u)])$ with (\ref{eq:8}) and (\ref{eqF}) by $d\varphi.v+D(Id,0,\varphi.u)-\varphi.D(Id,0,u)$. For $\varphi=Id$, the first part of the Proposition follows. The second statement follows for  $H^0(M,Aut(E))$-equivariant  $D^\prime_{2q}$.
\end{proof}

\begin{cor}\label{cor}
If $H^0(M,Der^{(2)}(\Lambda E))=0$ then there is a $D$ such that $\sigma_D$ is well-defined and injective. 
\end{cor}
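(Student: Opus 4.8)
The plan is to invoke Proposition~\ref{theo}: it suffices to produce, for every $q\geq2$, a map $D'_{2q}$ which factors through $pr_{(2q-2)}$, satisfies $dD'_{2q}(u)=R_{2q}(u)$, and obeys the identity~\eqref{eqF}. The first two properties hold without any assumption, as noted in the discussion preceding the Proposition; the point is to arrange \eqref{eqF} simultaneously, and this is where the hypothesis enters. Since $Der^{(2)}(\Lambda E)=\bigoplus_{k\geq1}Der_{2k}(\Lambda E)$, the assumption $H^0(M,Der^{(2)}(\Lambda E))=0$ is equivalent to $H^0(M,Der_{2k}(\Lambda E))=0$ for every $k\geq1$.

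First I would observe that $\lambda$ turns $\tilde C^1(M,Der^{(2)}(\Lambda E))$ into a set acted on by the group $G:=C^0(M,Der^{(2)}(\Lambda E))$ with the pointwise Baker--Campbell--Hausdorff multiplication $v\ast v':=\log(\exp(v)\exp(v'))$: indeed $\exp(\lambda(v,u))_{ij}=\exp(v_i)\exp(u_{ij})\exp(-v_j)$ gives $\lambda(v,\lambda(v',u))=\lambda(v\ast v',u)$ and $\lambda(0,u)=u$, and $\tilde C^1$ is stable under the action by the observation preceding the Proposition that $\lambda(v,u)\in\tilde C^1$ whenever $u\in\tilde C^1$. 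The orbits of this action are precisely the fibres of $[\exp(\,\cdot\,)]\colon\tilde C^1\to H^1(M,G_E)$. The crucial claim is that the action is \emph{free}. Suppose $\lambda(v,u)=u$, i.e. $\exp(v_i)\exp(u_{ij})=\exp(u_{ij})\exp(v_j)$ for all $i,j$. Then the local automorphisms $(\exp(v_i))_i$ are compatible with the transition cocycle $\exp(u)$, hence glue to a global automorphism $\Phi$ of the supermanifold defined by $\exp(u)$, with $\Phi-Id$ raising degree by at least two. I would then climb the finite degree filtration of $\mathcal O_{\Lambda E}$: applying $pr_2$ to $\exp(v_i)\exp(u_{ij})=\exp(u_{ij})\exp(v_j)$ yields $pr_2(v_i)=pr_2(v_j)$ on overlaps, hence a section of $H^0(M,Der_2(\Lambda E))=0$, so $v\in C^0(M,Der^{(4)}(\Lambda E))$; and once $v\in C^0(M,Der^{(2k)}(\Lambda E))$, the degree-$2k$ part of the same relation gives $pr_{2k}(v_i)=pr_{2k}(v_j)$, a section of $H^0(M,Der_{2k}(\Lambda E))=0$. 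As $Der_{2k}(\Lambda E)=0$ for $2k>\operatorname{rank}(E)$, the induction terminates with $v=0$.

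Granting freeness, here is how I would construct the $D'_{2q}$. Fix $q\geq2$ and put $\tilde X:=pr_{(2q-2)}\bigl(\tilde C^1(M,Der^{(2)}(\Lambda E))\bigr)\subset C^1(M,Der_{(2q-2)}(\Lambda E))$; the truncated group $C^0(M,Der_{(2q-2)}(\Lambda E))$ acts on $\tilde X$ via $pr_{(2q-2)}\lambda$, and the same filtration argument through degree $2q-2$ (using $H^0(M,Der_{2k}(\Lambda E))=0$ for $1\leq k\leq q-1$) shows this action is free too. Choose representatives $\{\bar u_\alpha\}$ for its orbits. For each $\alpha$ one has $R_{2q}(\bar u_\alpha)=pr_{2q}\mathbf{d}\exp(\bar u_\alpha)\in B^2(M,Der_{2q}(\Lambda E))$ — it equals $-d\,pr_{2q}(u_0)$ for any $u_0\in\tilde C^1$ with $pr_{(2q-2)}(u_0)=\bar u_\alpha$, since $\mathbf{d}\exp(u_0)=Id$ — so I may pick $c_\alpha$ with $dc_\alpha=R_{2q}(\bar u_\alpha)$. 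By freeness each element of $\tilde X$ is $pr_{(2q-2)}\lambda(\bar v,\bar u_\alpha)$ for a unique pair $(\alpha,\bar v)$, and I would define $D'_{2q}$, as a function of $pr_{(2q-2)}$ of its argument, to take the value $c_\alpha-pr_{2q}\log F_{2q}(\bar v,\bar u_\alpha)$ there. By construction $D'_{2q}$ factors through $pr_{(2q-2)}$; to check $dD'_{2q}=R_{2q}$ and \eqref{eqF} I would lift to $\tilde C^1$ and use the two identities from the proof of Proposition~\ref{theo}, namely $pr_{2q}\lambda(v,u)=d\,pr_{2q}(v)+pr_{2q}(u)+pr_{2q}\log F_{2q}(v,u)$ and, as a consequence, $R_{2q}(\lambda(v,u))=R_{2q}(u)-d\,pr_{2q}\log F_{2q}(v,u)$ for $u\in\tilde C^1$, together with $H^0(M,Der_{2q}(\Lambda E))=0$, which is what is needed to see that the cochains $pr_{2q}\log F_{2q}$ transform compatibly along the orbits. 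Proposition~\ref{theo} then yields a $D$ for which $\sigma_D$ is well-defined and injective.

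I expect the main obstacle to be twofold: the degree-by-degree bookkeeping behind the freeness statement, and the verification that the assignment defining $D'_{2q}$ on a whole orbit is consistent — equivalently, that $pr_{2q}\log F_{2q}(\,\cdot\,,\,\cdot\,)$ satisfies the cocycle identity for the $\lambda$-action once the sheaves $Der_{2k}(\Lambda E)$ are known to have no global sections, since it is precisely that vanishing which removes the a priori coboundary-type discrepancy in this identity.
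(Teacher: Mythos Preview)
Your approach is essentially the paper's: both reduce to Proposition~\ref{theo} by (i) showing, via the same degree-by-degree induction, that $H^0(M,Der^{(2)}(\Lambda E))=0$ forces the $\lambda$-action to be free (the paper phrases this as ``$\lambda(v,u)=u$ implies $v=0$''), and (ii) checking that \eqref{eqF} is compatible with the derivative requirement $dD'_{2q}=R_{2q}$, which is exactly the identity $R_{2q}(\lambda(v,u))=R_{2q}(u)-d\,pr_{2q}\log F_{2q}(v,u)$ established in the paper's ``Secondly\ldots'' paragraph. You are more explicit than the paper in building $D'_{2q}$ from orbit representatives and in flagging the cocycle identity for $pr_{2q}\log F_{2q}$ along orbits as a residual consistency issue; the paper subsumes this under the single clause ``$D'_{2q}$ can be well-defined as a map satisfying~\eqref{eqF}'' without spelling it out further.
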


\begin{proof}
 We show that there exists a choice of the $D^\prime_{2q}$ satisfying (\ref{eqF}). 
 First we have to check that $D^\prime_{2q}$ can be well-defined as a map satisfying (\ref{eqF}). That is that $\lambda(v,u)=u$ includes $pr_{2q}(\log(F_{2q}(v,u)))=0$ for all $q \geq 2$. 
 We follow by induction that $\lambda(v,u)=u$ includes $v=0$: $pr_0(v)=0$. Assume that $pr_{2s}(v)=0$ for all $s< q$ then 
 $$0=pr_{2q}(\lambda(v,u)-u)=pr_{2q}(\log(\exp(pr_{(2q-2)}(v)+pr_{2q}(v)).\exp(u))-u)=pr_{2q}(dv)$$
 and due to $H^0(M,Der^{(2)}(\Lambda E))=0$  it is $pr_{2q}(v)=0$. Finally we have $pr_{2q}(\log(F_{2q}(0,u)))=pr_{2q}(pr_{(2q-2)}(u))=0$.
 
 \smallskip\noindent
 Secondly we check that (\ref{eqF}) does not contradict the derivative conditions $dD^\prime_{2q}(u)=R_{2q}(u)$ for  $q\geq 2$ and $d(pr_2(u))=0$.   Deriving (\ref{eqF}), this is equivalent to checking whether: 
 \begin{align}\label{zw}
  &pr_{2q}(\mathbf d \exp(pr_{(2q-2)}(u))-\mathbf d \exp(pr_{(2q-2)}(\lambda(v,u)))-d\log(F_{2q}(v,u)))=0
 \end{align}
 Now for reasons of degree:
 \begin{align*}
  pr_{(2q-2)}(\lambda(v,u))&=pr_{(2q-2)}(\log(\exp(v).\exp(u)))=pr_{(2q-2)}(\log(F_{2q}(v,u)))\\
  &=pr_{(2q)}(\log(F_{2q}(v,u)))-pr_{2q}(\log(F_{2q}(v,u)))
 \end{align*}
and:
 \begin{align*}
  pr_{(2q)}(\exp( pr_{(2q-2)}(\lambda(v,u))))=pr_{(2q)}(F_{2q}(v,u))-pr_{2q}(\log(F_{2q}(v,u)))
 \end{align*}
Using this, (\ref{zw}) is equivalent to:
 \begin{align*}
  &pr_{2q}(\mathbf d \exp(pr_{(2q-2)}(u))-\mathbf d F_{2q}(v,u))=0
 \end{align*}
This always holds since for reasons of degree:
  \begin{align*}
   \mathbf d F_{2q}(v,u)=pr_{(2q)}\mathbf d ( \exp(pr_{(2q-2)}(v)).\exp(pr_{(2q-2)}(u)))=pr_{(2q)}(\mathbf d \exp(pr_{(2q-2)}(u))
  \end{align*}

\end{proof}

\section{Constructing non-split supermanifolds from cochains of nilpotent derivations}
\noindent
We have seen that a necessary condition on an element $u\in C^1(M,Der^{(2)}(\Lambda E))$ for $\exp(u) \in Z^1(M,G_E)$ (i.e. $u\in \tilde C^1(M,Der^{(2)}(\Lambda E))$) is $R_{2q}(u)\in B^2(M, Der_{2q}(\Lambda E))$ for $q\geq 2$ and $d(pr_2(u))=0$. In particular, the weaker condition $R_{2q}(u)\in Z^2(M, End_{2q}(\Lambda E))$ for $q\geq 2$ has to be satisfied. For shortening the notation we denote $u=\sum_{k=1}^\infty u_{2k}$ with $u_{2k}\in C^1(M,Der_{2k}(\Lambda E))$. For $q=2$ we see using $(du_2)_{ijk}=u_{2,ij}+u_{2,jk}+u_{2,ki}=0$:
\begin{align}\label{eq:5}
 R_{4}(u)_{jkl}=&\frac{1}{2}(u_{2,jk}^2+u_{2,kl}^2+u_{2,lj}^2)+u_{2,jk}u_{2,kl}+u_{2,jk}u_{2,lj}+u_{2,kl}u_{2,lj}\\\nonumber
 =&\frac{1}{2}(du_2^2)_{jkl}+u_{2,lj}^2+u_{2,jk}u_{2,kl}+u_{2,jk}u_{2,lj}+u_{2,kl}u_{2,lj}=\frac{1}{2}(du_2^2)_{jkl}+u_{2,jk}u_{2,kl}
\end{align}
Hence again by $du_2=0$:
\begin{align*}
 (dR_4(u))_{ijkl}=u_{2,jk}u_{2,kl}-u_{2,ik}u_{2,kl}+u_{2,ij}u_{2,jl}-u_{2,ij}u_{2,jk}=0
\end{align*}
So $R_4(u)\in Z^2(M, End_{4}(\Lambda E))$ independently of the choice of $u_2\in Z^1(M,Der_{2}(\Lambda E))$. We now analyze the condition $R_6(u)\in Z^2(M, End_{4}(\Lambda E))$. Therefor we study $R_6(u_2)$ with $u_2\in Z^1(M,Der_{2}(\Lambda E))$ first. By direct calculation it is:
\begin{align}\nonumber
 R_{6}(u_2)_{ijk}=&\quad u_{2,ij}u_{2,jk}u_{2,ki}\\\nonumber&+\frac{1}{2}(u_{2,ij}u_{2,jk}^2+u_{2,ij}^2u_{2,jk}+u_{2,ij}u_{2,ki}^2+u_{2,ij}^2u_{2,ki}+u_{2,jk}u_{2,ki}^2+u_{2,jk}^2u_{2,ki})\\\nonumber&+\frac{1}{6}(u_{2,ij}^3+u_{2,jk}^3+u_{2,ki}^3)\\\label{eq:1}
 =&\quad u_{2,ij}u_{2,jk}u_{2,ki}+\frac{1}{2}[u_{2,ij},u_{2,jk}^2]-\frac{1}{3}(u_{2,ij}^3+u_{2,jk}^3+u_{2,ki}^3)
\end{align}
Further by direct calculation using $du_2=0$:
\begin{align}\label{eqR6}
R_{6}(u)_{ijk}=& R_{6}(u_2)_{ijk}+u_{2,ij}u_{4,jk}+u_{4,ij}u_{2,jk}+u_{2,ij}u_{4,ki}+u_{4,ij}u_{2,ki}+u_{2,jk}u_{4,ki}+u_{4,jk}u_{2,ki}\nonumber\\&+\frac{1}{2}(u_{2,ij}u_{4,ij}+u_{4,ij}u_{2,ij}+u_{2,jk}u_{4,jk}+u_{4,jk}u_{2,jk}+u_{2,ki}u_{4,ki}+u_{4,ki}u_{2,ki})\nonumber\\
=&R_{6}(u_2)_{ijk}+u_{4,ij}(\frac{1}{2}u_{2,ij}+u_{2,jk}+u_{2,ki})+\frac{1}{2}u_{2,ij}u_{4,ij}\nonumber\\
&+u_{4,jk}(\frac{1}{2}u_{2,jk}+u_{2,ki})+(\frac{1}{2}u_{2,jk}+u_{2,ij})u_{4,jk}\nonumber\\
&+\frac{1}{2}u_{4,ki}u_{2,ki}+(\frac{1}{2}u_{2,ki}+u_{2,ij}+u_{2,jk})u_{4,ki}\nonumber\\
=&R_{6}(u_2)_{ijk}+\frac{1}{2}([u_{2,ij},u_{4,ij}]+[u_{2,jk},u_{4,jk}]-[u_{2,ki},u_{4,ki}])+[u_{2,ij},u_{4,jk}]\nonumber\\
=&R_{6}(u_2)_{ijk}+\frac{1}{2}(d[u_{2},u_{4}])_{ijk}+[u_{2,ij},u_{4,jk}]
\end{align}
So it follows that $R_{6}(u)\in Z^2(M, End_6(\Lambda E))$ is equivalent to: 
\begin{align}\label{eq:2}
 d R_{6}(u_2)=-d([u_{2,ij},u_{4,jk}])_{ijk}
\end{align}
The left hand side is $(d R_{6}(u_2))_{ijkl}= R_{6}(u_2)_{jkl}- R_{6}(u_2)_{ikl}+ R_{6}(u_2)_{ijl}- R_{6}(u_2)_{ijk}$. The summand $\frac{1}{3}(u_{2,ij}^3+u_{2,jk}^3+u_{2,ki}^3)_{ijk}=\frac{1}{3}du_2^3$ in (\ref{eq:1}) has no contribution and we find with $du_2=0$ and $u_{2,jk}^2+u_{2,kl}^2-u_{2,jl}^2=(du_2^2)_{jkl}$:
\begin{align}\label{eq:3}
&(d R_{6}(u_2))_{ijkl}\\\nonumber=&u_{2,jk}u_{2,kl}u_{2,lj}-u_{2,ik}u_{2,kl}u_{2,li}+u_{2,ij}u_{2,jl}u_{2,li}-u_{2,ij}u_{2,jk}u_{2,ki}\\\nonumber&+\frac{1}{2}([u_{2,jk},u_{2,kl}^2]-[u_{2,ik},u_{2,kl}^2]+[u_{2,ij},u_{2,jl}^2]-[u_{2,ij},u_{2,jk}^2])\\\nonumber=&u_{2,jk}u_{2,kl}u_{2,lj}-u_{2,ik}u_{2,kl}u_{2,li}+u_{2,ij}u_{2,jl}u_{2,li}-u_{2,ij}u_{2,jk}u_{2,ki}-\frac{1}{2}[u_{2,ij},(du_2^2)_{jkl}]
\end{align}
For the right hand side of (\ref{eq:2}) we have with  $du_2=0$:
\begin{align*}
 -(d([u_{2,ij},u_{4,jk}])_{ijk})_{ijkl}
 =&-[u_{2,jk},u_{4,kl}]+[u_{2,ik},u_{4,kl}]-[u_{2,ij},u_{4,jl}]+[u_{2,ij},u_{4,jk}]\\\nonumber
 =& \quad \ [u_{2,ij},u_{4,jk}+u_{4,kl}+u_{4,lj}]=[u_{2,ij},(du_4)_{jkl}] 
\end{align*}
Under the stronger necessary condition and $du_4=- R_{4}(u)$ we obtain:
\begin{align}\label{eq:4}
 -(d([u_{2,ij},u_{4,jk}])_{ijk})_{ijkl}=-[u_{2,ij},R_{4}(u)_{jkl}] 
\end{align}
Inserting  (\ref{eq:3}), (\ref{eq:4}), and (\ref{eq:5}) in (\ref{eq:2}), $R_{6}(u)\in Z^2(M, End_6(\Lambda E))$ is equivalent to:
\begin{align*}
u_{2,jk}u_{2,kl}u_{2,lj}-u_{2,ik}u_{2,kl}u_{2,li}+u_{2,ij}u_{2,jl}u_{2,li}-u_{2,ij}u_{2,jk}u_{2,ki}+[u_{2,ij},u_{2,jk}u_{2,kl}]=0
\end{align*}
By $du_2=0$ this is always satisfied. Hence we summarize:

\begin{prop}\label{P}
Let $u\in C^1(M,Der^{(2)}(\Lambda E))$. The following implications of necessary conditions for $\exp(u)\in Z^1(M,G_E)$ exist:\smallskip\\
If  $du_2=0$ is satisfied then $R_4(u)\in Z^2(M,End_4(\Lambda E))$ is satisfied. \smallskip\\
If $du_2=0$ and $du_4=-R_4(u)$ are satisfied then $R_6(u)\in Z^2(M,End_6(\Lambda E))$ is satisfied and: $$R_6(u)\in Z^2(M,Der_6(\Lambda E))\qquad \Leftrightarrow\qquad R_6(u_2)\in C^2(M,Der_6(\Lambda E)) $$
\end{prop}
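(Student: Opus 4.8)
The two stated implications are essentially the content of the computations carried out just before the proposition, so for those the plan is simply to cite them: equation~(\ref{eq:5}) gives $R_4(u)_{jkl}=\tfrac12(du_2^2)_{jkl}+u_{2,jk}u_{2,kl}$, and the line following it shows $dR_4(u)=0$ once $du_2=0$, so $R_4(u)\in Z^2(M,End_4(\Lambda E))$; similarly the computation culminating in~(\ref{eq:4}) reduces $dR_6(u)=0$, under $du_2=0$ and $du_4=-R_4(u)$, to an identity in the $u_{2,ij}$ that follows from $du_2=0$, so $R_6(u)\in Z^2(M,End_6(\Lambda E))$. The remaining point, and the only one requiring a separate argument, is the equivalence.

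For the equivalence I would start from the last line of~(\ref{eqR6}), which under $du_2=0$ reads, componentwise in the \v{C}ech indices,
\[
R_6(u)_{ijk}=R_6(u_2)_{ijk}+\tfrac12(d[u_2,u_4])_{ijk}+[u_{2,ij},u_{4,jk}].
\]
Both correction terms take values in $Der_6(\Lambda E)$: a commutator of even derivations of $\mathcal O_{\Lambda E}$ is again an even derivation, and the \v{C}ech differential preserves the coefficient sheaf, so $[u_2,u_4]$ and hence $d[u_2,u_4]$ are $Der(\Lambda E)$-valued, while the $2\mathbb Z$-grading places the relevant components in $Der_6$. Thus $R_6(u)$ and $R_6(u_2)$ differ by an element of $C^2(M,Der_6(\Lambda E))$, and therefore $R_6(u)\in C^2(M,Der_6(\Lambda E))$ if and only if $R_6(u_2)\in C^2(M,Der_6(\Lambda E))$. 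Since $R_6(u)\in Z^2(M,End_6(\Lambda E))$ has already been established, a closed $End_6$-valued cochain that is componentwise a derivation automatically lies in $Z^2(M,Der_6(\Lambda E))$, whereas $Z^2(M,Der_6(\Lambda E))\subseteq C^2(M,Der_6(\Lambda E))$ is trivial; hence $R_6(u)\in Z^2(M,Der_6(\Lambda E))$ is equivalent to $R_6(u)\in C^2(M,Der_6(\Lambda E))$, and combining the two equivalences yields the claim.

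I do not anticipate a genuine obstacle. The single point that needs care is that the passage from the raw expression for $R_6(u)_{ijk}$ in~(\ref{eqR6}) to its bracket form --- which is exactly what exhibits the correction as $Der_6$-valued rather than merely $End_6$-valued --- uses nothing beyond $du_2=0$, and that $Der_6(\Lambda E)\subseteq End_6(\Lambda E)$ is a subsheaf stable under the bracket and under the \v{C}ech differential; both facts are immediate. Everything else is bookkeeping of the kind already displayed in the section.
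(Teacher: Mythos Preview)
Your proposal is correct and follows exactly the paper's approach: the two implications are indeed the content of the computations preceding the proposition, and the equivalence is read off from the bracket form of~(\ref{eqR6}), since the difference $R_6(u)-R_6(u_2)$ is a sum of commutators of derivations and hence $Der_6$-valued. The paper's own proof is the single sentence ``The second part of the second statement follows from~(\ref{eqR6})''; you have simply spelled out what that sentence means.
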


\begin{proof}
 The second part of the second statement follows from (\ref{eqR6}).
\end{proof}

In the following we denote: $$\begin{array}{rcl}\tilde Z^1(M,Der_2(\Lambda E)):=\big\{u_2\in Z^1(M,Der_2(\Lambda E)) \ \big| && R_{4}(u_2)\in C^2(M,Der_4(\Lambda E))  \\&\mbox{ and }& R_{6}(u_2)\in C^2(M,Der_6(\Lambda E))\ \ \big\}\end{array}$$ 
It follows with Corollary \ref{cor}:
\begin{cor}\label{corx}
 Let $E\to M$ be a vector bundle of rank $6$ or $7$. Assume further that $H^2(M,Der_4(\Lambda E))=H^2(M,Der_6(\Lambda E))=0$.\smallskip\\ The necessary and sufficient condition on a cochain $u_2\in C^1(M,Der_2(\Lambda E))$ for the existence of a  $u\in \tilde C^1(M,Der^{(2)}(\Lambda E))$ with $pr_2(u)=u_2$  is  $u_2\in \tilde Z^1(M,Der_2(\Lambda E))$.\smallskip\\ If in addition $H^0(M,Der^{(2)}(E))=0$ then there is a $D$ such that:
 $$\sigma_D:H^1(M,G_E)\to\frac{\tilde Z^1(M,Der_2(\Lambda E))}{B^1(M,Der_2(\Lambda E))}\oplus H^1(M,Der_4(\Lambda E))\oplus H^1(M,Der_6(\Lambda E))$$
 is a well-defined bijection.
\end{cor}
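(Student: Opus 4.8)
The plan is to reduce Corollary~\ref{corx} to the machinery already assembled in Corollary~\ref{cor} and Proposition~\ref{P}, with the vanishing hypotheses on $H^2$ doing the work of promoting the automatic $Z^2$-statements to $B^2$-statements. First I would unwind what it means for a given $u_2\in C^1(M,Der_2(\Lambda E))$ to extend to some $u\in\tilde C^1(M,Der^{(2)}(\Lambda E))$ with $pr_2(u)=u_2$. Since $\mathrm{rank}(E)\in\{6,7\}$, any even derivation increasing degree is a sum $u_2+u_4+u_6$ (degree-$8$ and higher components vanish on $\Lambda E$), so the non-abelian cocycle condition $\exp(u)\in Z^1(M,G_E)$ is, by the reformulation in Section~1, exactly the three conditions $d(pr_2u)=0$, $R_4(u)=-d(pr_4 u)$, and $R_6(u)=-d(pr_6 u)$. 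I would then argue the existence of $u_4,u_6$ step by step. The condition $du_2=0$ is just $u_2\in Z^1$. Given that, Proposition~\ref{P} gives $R_4(u)\in Z^2(M,End_4(\Lambda E))$ automatically; to solve $R_4(u)=-du_4$ with $u_4\in C^1(M,Der_4(\Lambda E))$ one needs $R_4(u)$ to actually land in $Z^2(M,Der_4(\Lambda E))$ and to be a coboundary there. The first is the hypothesis $R_4(u_2)\in C^2(M,Der_4(\Lambda E))$ packaged into $\tilde Z^1$ (note $R_4(u)$ only depends on $u_2$, by~(\ref{eq:5})); the second follows from $H^2(M,Der_4(\Lambda E))=0$. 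This produces $u_4$, unique up to $B^1$, hence the freedom $H^1(M,Der_4(\Lambda E))$ in the choice.

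Next, with $du_2=0$ and $du_4=-R_4(u)$ in hand, Proposition~\ref{P} gives $R_6(u)\in Z^2(M,End_6(\Lambda E))$, and moreover $R_6(u)\in Z^2(M,Der_6(\Lambda E))$ iff $R_6(u_2)\in C^2(M,Der_6(\Lambda E))$, which is the second defining condition of $\tilde Z^1$. Then $H^2(M,Der_6(\Lambda E))=0$ makes $R_6(u)$ a coboundary, so $u_6$ exists solving $R_6(u)=-du_6$, again unique up to $B^1$, giving the factor $H^1(M,Der_6(\Lambda E))$. Conversely, if such a $u$ exists then $du_2=0$ and the two necessary conditions $R_4(u)\in B^2(M,Der_4(\Lambda E))\subset C^2(M,Der_4(\Lambda E))$ and $R_6(u)\in B^2(M,Der_6(\Lambda E))$ hold; combined with~(\ref{eqR6}) (which says $R_6(u)-R_6(u_2)$ is always a sum of commutators, hence in $C^2(M,Der_6(\Lambda E))$) this forces $R_6(u_2)\in C^2(M,Der_6(\Lambda E))$, so $u_2\in\tilde Z^1(M,Der_2(\Lambda E))$. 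This establishes the necessary-and-sufficient characterization.

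For the final bijection statement, I would invoke Corollary~\ref{cor}: since $H^0(M,Der^{(2)}(\Lambda E))=0$ there is a choice of $D$ making $\sigma_D\colon H^1(M,G_E)\to H^1(M,Der^{(2)}(\Lambda E))$ well-defined and injective, and because $Der^{(2)}(\Lambda E)=Der_2\oplus Der_4\oplus Der_6$ for rank $6,7$, the target splits as $H^1(M,Der_2(\Lambda E))\oplus H^1(M,Der_4(\Lambda E))\oplus H^1(M,Der_6(\Lambda E))$. It remains to pin down the image. By the formula $\sigma_D([\exp(u)])=[D(Id,0,u)+u]$ and the fact that $D(Id,0,u)=D^\prime(u)\in C^1(M,Der_4\oplus Der_6)$ has no degree-$2$ part, the $Der_2$-component of a class in the image is exactly $[u_2]$ with $u_2\in\tilde Z^1(M,Der_2(\Lambda E))$ by the characterization above; and conversely, given any $u_2\in\tilde Z^1$ the extension constructed above produces $u\in\tilde C^1$ with that degree-$2$ part, so $[u_2]$ modulo $B^1$ is hit. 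The $Der_4$- and $Der_6$-components can be set to anything, since for fixed $u_2$ the choices of $u_4$ and $u_6$ range over full $H^1$-cosets, and $\sigma_D$ is injective so these produce distinct classes. Hence $\sigma_D$ restricts to a bijection onto $\tilde Z^1(M,Der_2(\Lambda E))/B^1(M,Der_2(\Lambda E))\oplus H^1(M,Der_4(\Lambda E))\oplus H^1(M,Der_6(\Lambda E))$.

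The main obstacle I anticipate is the surjectivity-onto-the-stated-target bookkeeping: one must be careful that varying $u_4$ within its $B^1$-ambiguity does not feed back into the solvability of the degree-$6$ equation in an uncontrolled way, i.e. that $R_6(u)$ modulo $B^2(M,Der_6(\Lambda E))$ is insensitive to the choice of $u_4$ (it is, because changing $u_4$ by a coboundary changes $R_6(u)$ by a coboundary, as one sees from~(\ref{eqR6}) and $d^2=0$), and dually that the degree-$4$ and degree-$6$ components of $\sigma_D$ genuinely sweep out all of $H^1(M,Der_4(\Lambda E))\oplus H^1(M,Der_6(\Lambda E))$ rather than a subset cut out by some hidden compatibility — here injectivity of $\sigma_D$ together with the independent $H^1$-ambiguities in $u_4,u_6$ is what closes the argument, and the $H^2$-vanishing is precisely what removes any obstruction at each stage.
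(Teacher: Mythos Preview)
Your argument is correct and follows the same route as the paper's proof: invoke Proposition~\ref{P} to get $R_4(u_2)\in Z^2(M,Der_4(\Lambda E))$, use $H^2(M,Der_4(\Lambda E))=0$ to solve for $u_4$, then repeat with $R_6(u_2+u_4)$ and $H^2(M,Der_6(\Lambda E))=0$ to obtain $u_6$; the bijection statement then comes from Corollary~\ref{cor}. The paper's own proof is considerably terser and only spells out the sufficiency step, whereas you also treat necessity and the surjectivity bookkeeping for $\sigma_D$ explicitly; one small slip: the ambiguity in $u_4$ (resp.\ $u_6$) is up to $Z^1$, not $B^1$, though your conclusion that the resulting freedom in cohomology is $H^1$ is of course right.
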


\begin{proof}
 By the proposition, $R_{4}(u_2) \in  Z^1(M,Der_4(\Lambda E))$. Fix any cochain $u_4$ such that $R_{4}(u_2)=-du_4$. Now $R_{6}(u_2+u_4) \in  Z^1(M,Der_6(\Lambda E))$ is satisfied so there is a $u_6$ with $R_{6}(u_2+u_4)=-du_6$.
\end{proof}

\begin{example}
 Let $M=\mathbb P^2(\mathbb C)\backslash \{[0:0:1]\}$ and $E$ any vector bundle of rank up to $7$. Since $M$ is strongly $2$-complete, we have $H^2(M,Der^{(2)}(\Lambda E))=0$ . There is a Leray cover (for coherent sheaf cohomology) of $M$ with two components: two of the three standard coordinate charts of $\mathbb P^2(\mathbb C)$. So we have $R_{2q}\equiv 0$ and $\tilde Z^1(M,Der_2(\Lambda E))=Z^1(M,Der_2(\Lambda E))$ and by Corollary \ref{corx} any $u_2\in Z^1(M,Der_2(\Lambda E))$ can be continued to a $u\in C^1(M,Der^{(2)}(\Lambda E))$ with $pr_2(u)=u_2$ such  that $\exp(u)$ defines a supermanifold structure on $M$.
\end{example}

For any vector bundle $E$ and $q\geq 1$ we have the short exact sequence:
$$ 0\to Hom_{\mathcal O_M}(E,\Lambda^{2q+1}E)\to Der_{2q}(\Lambda E)\to Der(\mathcal O_M,\Lambda^{2q}E)\to 0$$
the second arrow by continuation via graded Leibniz rule trivially on $\mathcal O_M$ and the third arrow by restriction. By the long exact sequence of cohomology we obtain exactness of:
\begin{align}\label{dfg}
 H^0(M,Hom_{\mathcal O_M}(E,\Lambda^{2q+1}E))\to H^0(M,Der_{2q}(\Lambda E))\to H^0(M,Der(\mathcal O_M,\Lambda^{2q}E))
\end{align}

\begin{example}
The conditions $H^2(M,Der_4(\Lambda E))=H^2(M,Der_6(\Lambda E))=0$ are satisfied for compact Riemannian surfaces $M$.  
Let $M=\mathbb P^1(\mathbb C)$ and fix a sum of line bundles $E=\bigoplus_{i=1} ^{k}\mathcal O(l_i)$ with $k\leq7$ and $l_1\leq \cdots \leq l_k$ such that:
\begin{align*}
 l_{k-1}+l_k<-2  \qquad \mbox{ and} \qquad 
 l_{k-2}+l_{k-1}+l_k-l_1<0 
\end{align*}
Then  for $q=1,2,3$:
\begin{align*}
 &Hom_{\mathcal O_M}(E,\Lambda^{2q+1}E)\cong \mathcal O_{\Lambda^{2q+1}E}\otimes \mathcal O_{E^\ast}\\
 &Der(\mathcal O_M,\Lambda^{2q}E)\cong \mathcal O_{\Lambda^{2q}E}\otimes \mathcal O(2)
\end{align*}
Now due to $H^0(M,\mathcal O(l))=0$ for $l<0$: $$H^0(M,Hom_{\mathcal O_M}(E,\Lambda^{2q+1}E))=H^0(M,Der(\mathcal O_M,\Lambda^{2q}E))=0$$ By (\ref{dfg}), $H^0(M,Der^{(2)}(\Lambda E))=0$. There is a Leray cover of $M$ (for coherent sheaf cohomology) with two components, so $\tilde Z^1(M,Der_2(\Lambda E))=Z^1(M,Der_2(\Lambda E))$. Hence Corollary \ref{corx} yields a bijection $\sigma_D:H^1(M,G_E)\to H^1(M,Der^{(2)}(\Lambda E))$. 
\end{example}

\bibliographystyle{alpha} 
\addcontentsline{toc}{section}{References}

\end{document}